\newenvironment{psmallmatrix}
  {\left(\begin{smallmatrix}}
  {\end{smallmatrix}\right)}
\theoremstyle{plain}
\newtheorem{thm}{Theorem}
\newtheorem{lem}[thm]{Lemma}
\def\<{\langle} \def\>{\rangle}
\def\RR{\mathbb{R}}
\def\ZZ{\mathbb{Z}}  
\def\cM{\mathcal{M}}
\def\cQ{\mathcal{Q}}
\def\st{\;|\;}
\def\ra{\rightarrow}
\def\Sym{\text{Sym}}
\def\gl2r{\text{GL}(2, \RR)}
\def\sl2z{\text{SL}(2, \ZZ)}
\def\psl2z{\text{PSL}(2,\ZZ)}
\def\sltwor{\text{SL}(2,\RR)}
\def\b3z{B_3/Z(B_3)}
\def\zb3{Z(B_3)}
\def\symsym{\Sym(n)^3/\Sym(n)}
\title{An infinite family of non-cyclic 1-cylinder pillowcase-tiled surfaces}
\author[Abdalla]{Malak Abdalla}
\author[Brown]{Gabriela Brown}
\address{Department of Mathematics, University of Wisconsin-Madison, USA}
\email{gbrown29@wisc.edu}
\date{May 26, 2026}
\subjclass[2020]{30F30, 37D40, 32G15, 20F36}
\begin{document}

\begin{abstract}
Apisa-Wright conjectured that all branched covers of quadratic differentials in $\cQ(-1^4)$ with at most one cylinder in each direction are cyclic covers.
We provide infinitely many counterexamples to this conjecture.
\end{abstract}

\maketitle

\section{Introduction}
A \textit{(degree n) pillowcase-tiled surface} (PTS) is a (degree $n$) half-translation surface cover of a quadratic differential in $\cQ(-1^4)$ branched over, at most, the four poles. Such a surface is called \textit{1-cylinder} if the closure of every cylinder is the full surface. Apisa and Wright conjectured that all $1$-cylinder PTS are cyclic covers \cite[8.14]{ApisaWrightGeminal}. Our main result is a resolution of this conjecture with an infinite family of counterexamples.

\begin{thm}[Main result]\label{maintheorem} 
    When $n\geq 5$ is odd there exists a non-cyclic 1-cylinder degree $n$ PTS. No 1-cylinder degree $n$ PTS exists when $n$ is even.
\end{thm}

The Ornithorynque, discovered by Forni-Matheus \cite{ForniMatheus} (see also Forni-Matheus-Zorich \cite{FMZ}), was the first (holonomy cover of a) 1-cylinder PTS to be found. It is a degree $3$ cyclic cover and corresponds to the $(n,d) = (4,4)$ case in \cite[Table 10]{McBraidGroup}. Infinitely many more examples of (holonomy covers of) cyclic 1-cylinder PTS were found by Matheus and Yoccoz \cite{MatheusYoccoz} and Apisa and Wright \cite{ApisaWrightGeminal}. The famous Eierlegend-Wollmilchsau is not a holonomy cover of a 1-cylinder PTS
because there is no involution simultaneously exchanging every pair of homologous cylinders. This will follow from Theorem \ref{maintheorem}.

To construct our infinite family of counterexamples, we use the following purely algebraic characterization of being a 1-cylinder PTS. In what follows we let $\Sym(n)$ denote the symmetric group on $n$ elements. There is an action of $\Sym(n)$ on $\Sym(n)^k$ by simultaneous conjugation for any $k >0$. There is also an action of the braid group $B_3$ on $\Sym(n)^3$, which is recalled before Lemma \ref{braid_group}.

\begin{thm}\label{onecyl}
    A degree $n$ $1$-cylinder PTS is uniquely specified by a triple $(a,b,c)$ in $\Sym(n)^3/\Sym(n)$ satisfying the following condition: for every element $g$ of the braid group $B_3$, if $g \cdot(a,b,c) = (a',b',c')$, then $a'b'$ is an $n$-cycle.
\end{thm}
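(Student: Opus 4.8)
The plan is to set up the standard dictionary between branched covers and monodromy data, and then track how the combinatorial data of a cylinder decomposition is encoded. First I would recall that a degree $n$ cover of a quadratic differential in $\cQ(-1^4)$, branched over at most the four poles, is classified up to isomorphism by a conjugacy class of homomorphisms $\pi_1(\mathbb{P}^1 \setminus \{4 \text{ points}\}) \to \Sym(n)$; equivalently, by a tuple $(a,b,c,d) \in \Sym(n)^4$ with $abcd = 1$, taken up to simultaneous conjugation, where $a,b,c,d$ are the local monodromies around the four poles. Since $d$ is determined by $(a,b,c)$, the covers are parametrized by $\Sym(n)^3/\Sym(n)$, which explains the shape of the tuple in the statement. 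I would be careful here to fix conventions: the base pillowcase is a square with its four corners as the poles, so there is a canonical choice of standard cylinder on it (the horizontal one, say), and the horizontal core curve of the base, with its two "edge" crossings, determines the permutations whose product governs how the horizontal strip is glued up on the cover.

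Next I would identify which covers are $1$-cylinder. The key observation is that a cylinder on the cover lies over the standard cylinder on the pillowcase, and the cylinders over it correspond exactly to the cycles of the permutation recording the return map of the horizontal foliation — which, after choosing the convention, is the permutation $ab$ (product of two of the three generators, whichever two correspond to the pair of branch points separated by the core curve). The cover is $1$-cylinder, meaning every horizontal cylinder is dense, precisely when there is a single such cylinder, i.e. when $ab$ is an $n$-cycle. But "$1$-cylinder" as defined in the paper requires that the closure of \emph{any} cylinder is the full surface, and cylinders come in all directions, not just horizontal: every rational direction on the pillowcase gives a cylinder decomposition of the cover, and the direction-changing is implemented by the mapping class group of the four-punctured sphere acting on the monodromy tuple. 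That action is exactly the $B_3$-action on $\Sym(n)^3$ recalled before Lemma \ref{braid_group} (the spherical/annular braid group on three strands surjects onto the relevant mapping class group, fixing the fourth puncture). So a cylinder in the direction obtained by applying $g \in B_3$ is dense iff, writing $g\cdot(a,b,c) = (a',b',c')$, the permutation $a'b'$ is an $n$-cycle.

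Assembling these: a cover is $1$-cylinder iff for \emph{every} direction the horizontal analysis gives a single cylinder, iff for every $g \in B_3$ the product $a'b'$ of the first two coordinates of $g\cdot(a,b,c)$ is an $n$-cycle. This is the stated condition, and uniqueness of the PTS given the tuple is just the classification of branched covers up to isomorphism. I would also note that the condition is well-defined on the $\Sym(n)/$-orbit since simultaneous conjugation commutes with the $B_3$-action and preserves cycle type.

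The main obstacle I expect is bookkeeping with conventions, and in particular justifying the two central claims precisely: (i) that the cylinders of the cover in the horizontal direction biject with the cycles of $ab$ (one must check that the vertical identification on the pillowcase square contributes the remaining structure but does not merge or split horizontal cylinders, and that density of a horizontal cylinder on the cover is equivalent to transitivity of $\langle ab\rangle$ on the $n$ sheets along the horizontal core), and (ii) that every cylinder on the cover is horizontal for \emph{some} pillowcase direction and that all such directions are reached by the $B_3$-action. For (ii) the cleanest route is: any cylinder on the cover projects to a cylinder on the pillowcase, which lies in some rational direction; change coordinates by an element of $\mathrm{SL}(2,\ZZ)$ (acting on the pillowcase, hence on the cover) to make that direction horizontal; and then observe that this $\mathrm{SL}(2,\ZZ)$-action on monodromy data is precisely the $B_3$-action, using that $B_3/Z(B_3) \cong \mathrm{PSL}(2,\ZZ)$, a fact the paper's notation ($\b3z$, $\psl2z$) signals is available. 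Everything else is the standard correspondence between origami-type constructions and permutation tuples.
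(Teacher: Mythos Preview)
Your proposal is correct and follows essentially the same route as the paper: establish the monodromy dictionary (the paper's Lemma~\ref{PTS}), show that having a single horizontal cylinder is equivalent to $ab$ being an $n$-cycle (Lemma~\ref{n-cycle}), use transitivity of $\mathrm{SL}(2,\ZZ)$ on cylinder directions of the pillowcase (Lemma~\ref{one-cyl}) to reduce an arbitrary direction to horizontal, and identify the induced action on tuples with the $B_3$-action via $B_3/Z(B_3)\cong\mathrm{PSL}(2,\ZZ)$ (Lemma~\ref{braid_group}). The only cosmetic difference is that you first invoke the mapping class group of the four-punctured sphere before passing to $\mathrm{SL}(2,\ZZ)$, whereas the paper works directly with $\mathrm{SL}(2,\ZZ)$ throughout.
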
 

The conjecture of Apisa and Wright developed from their work on a still-open conjecture of Gekhtman and Markovic. To state it, let $\cQ$ be a connected component of a stratum of quadratic differentials, and let $\cM_\cQ \subseteq \cQ$ be the subset of all quadratic differentials that generate a Teichm{\"u}ller disk that is a holomorphic retract of Teichm{\"u}ller space. In a stratum that has all even order zeros, Kra \cite{KraCaratheodory} showed that $\cM_\cQ = \cQ$ (see also McMullen \cite[Theorem 4.1]{Mc}). Gekhtman and Markovic conjecture \cite[Conjecture 1.1]{GekhtmanMarkovic} that $\cM_\cQ = \emptyset$ otherwise. 

By \cite[Lemma 2.2, \S 2.4, Corollary 4.4]{GekhtmanMarkovic} (see also the discussion in \cite[\S 1.4]{ApisaWrightGeminal}), $\cM_\cQ$ is an affine invariant subvariety (AIS) that is \textit{cylinder-free} i.e. closed under shearing and stretching any cylinder. In the case that $\cQ$ consists of squares of holomorphic 1-forms, Mirzakhani-Wright \cite{MirWri2} showed that the only cylinder-free AIS in $\cQ$ is $\cQ$. Classifying the cylinder-free AIS is a plausible approach to resolving the conjecture of Gekhtman-Markovic. Since Apisa-Wright \cite{ApisaWrightGeminal} showed the boundary of a cylinder-free AIS is still cylinder-free, one could use the inductive method of Apisa-Wright \cite{ApisaWrightHighRank} and the WYSIWYG boundary introduced by Mirzakhani-Wright \cite{MirWri} to classify all cylinder-free AIS. The base case of such an argument would be to classify all cylinder-free Teichm{\"u}ller curves. These are exactly the 1-cylinder PTS and 1-cylinder square tiled surfaces, the latter of which are flat tori by \cite{MirWri2}.

The ideas in our proof of Theorem \ref{onecyl} could lead to a new approach for classifying Shimura-Teichm{\"u}ller curves, which were classified in all but genus $5$ by Möller \cite{Moller}. The remaining case was classified with computer assistance by Aulicino-Norton \cite{AulicinoNorton}. The family we construct in Theorem \ref{maintheorem} also takes a step towards understanding square-tiled surfaces where all cusps have rank 1, as defined in \cite[Eq. 7.1]{McModularSymbols}, since these include all holonomy covers of 1-cylinder PTS.

\subsection*[]{Acknowledgements} We would like to thank Paul Apisa for his mentorship throughout this project, as well as the referees for their detailed and helpful feedback. This project started during the summer 2024 research program for undergraduates at UW Madison supported by NSF grant DMS 2304840, and continued in the fall 2024 Madison Experimental Mathematics lab supported by NSF DMS 2230900.

\section{Background}
The name pillowcase-tiled surface comes from a specific element $Y \in \cQ(-1^4)$ called the pillowcase, as shown in Figure \ref{pillowcase}. Fix a basepoint $y$, and let $Z$ denote the set of four branch points.
\begin{figure}[H]
    \centering
    \includegraphics[width=0.25\textwidth]{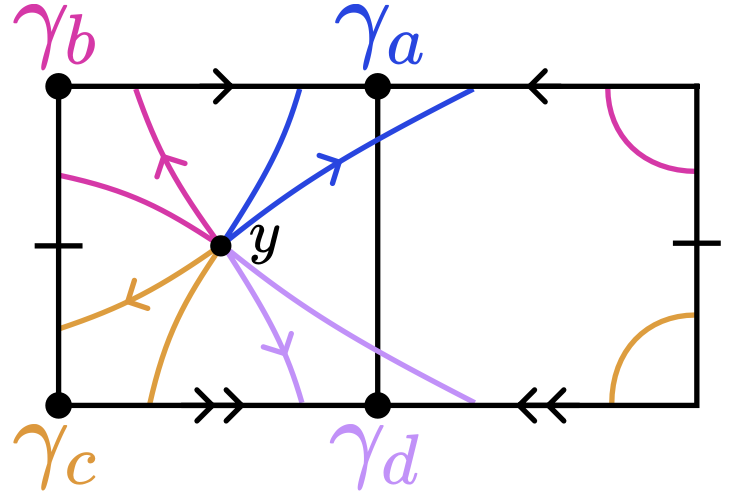} 
    \caption{The pillowcase $Y$, shown with generators of the fundamental group of the surface punctured at four marked points.}
    \label{pillowcase}
\end{figure}
It is sufficient to study covers of the pillowcase because every surface in $\cQ(-1^4)$ is in the $\sltwor$ orbit of the pillowcase. The stabilizer of the pillowcase under the $\sltwor$ action is $\sl2z$. We will use the following standard result. 

\begin{lem}\label{one-cyl}
    The pillowcase is a 1-cylinder surface and $\sl2z$ acts transitively on cylinder directions.
\end{lem}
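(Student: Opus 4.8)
The plan is to work with the pillowcase in its standard flat model $X = (\RR^2/\ZZ^2)/\langle\iota\rangle$, where $\iota$ is the involution induced by $-I \in \sltwor$, and to transport the statement to this model. The first step is to analyze the horizontal direction. The involution $\iota$ has exactly four fixed points on $\RR^2/\ZZ^2$, namely the points of order dividing $2$; these descend to the four cone points of angle $\pi$, so that $X \in \cQ(-1^4)$. Taking $(\RR/\ZZ)\times[0,\tfrac12]$ as a fundamental domain, one checks that $X$ is obtained from the flat cylinder $(\RR/\ZZ)\times[0,\tfrac12]$ by folding each of its two boundary circles onto itself via $x\mapsto -x$. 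In particular, in the horizontal direction $X$ decomposes into a single cylinder of circumference $1$ and height $\tfrac12$, of area $\tfrac12=\mathrm{Area}(X)$, whose closure is all of $X$. Thus there is at least one cylinder direction, and it has the property required by the definition.

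Next I would pin down the $\sl2z$-action and the set of cylinder directions. Any $g\in\sl2z$ preserves $\ZZ^2$ and commutes with $-I$, so it induces an affine, flat-structure-preserving automorphism of $X$; such an automorphism sends cylinder directions to cylinder directions and carries a cylinder decomposition to a cylinder decomposition, in particular preserving the number of cylinders in a direction. Since $-I$ fixes every line direction, the linear foliation of $\RR^2/\ZZ^2$ of slope $\tan\theta$ descends to the directional foliation $\mathcal F_\theta$ of $X$; hence $\mathcal F_\theta$ is periodic (a union of cylinders) if and only if the corresponding torus foliation is, i.e. if and only if $\theta$ is rational. Therefore the cylinder directions of $X$ are exactly the rational directions, which correspond to primitive vectors of $\ZZ^2$ up to sign, i.e. to $\mathbb{P}^1(\mathbb{Q})$.

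To conclude transitivity, I would invoke the standard fact that $\sl2z$ acts transitively on primitive integer vectors: given primitive $(p,q)$, choose $r,s$ with $ps-qr=1$, so that $\begin{psmallmatrix}p&r\\q&s\end{psmallmatrix}$ sends $(1,0)$ to $(p,q)$; hence $\sl2z$ acts transitively on rational directions, that is, on the cylinder directions of $X$. Finally, combine the two halves: the horizontal direction is a single cylinder dense in $X$, and every cylinder direction is $\sl2z$-equivalent to it by an automorphism of $X$, so every cylinder direction of $X$ consists of a single cylinder whose closure is $X$ — which is exactly the assertion that $X$ is a $1$-cylinder surface.

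I expect the only point needing genuine care to be the bookkeeping in the horizontal computation after quotienting by $\iota$: one must verify that the two horizontal circles at heights $y_0$ and $1-y_0$ are glued to a single core curve of length $1$ (and not to two curves, nor to a single curve of length $\tfrac12$), and that the degenerate leaves at heights $0$ and $\tfrac12$ each collapse to a single saddle connection joining two of the four cone points, so that the decomposition is honestly one cylinder. The remaining ingredients — the descent of linear foliations under $-I$, and the transitivity of $\sl2z$ on primitive vectors — are routine.
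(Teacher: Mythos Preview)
Your argument is correct. Note, however, that the paper does not actually prove this lemma: it is introduced with ``We will use the following standard result'' and left without proof. So there is nothing in the paper to compare your approach against beyond that remark.

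For the record, your route---realizing the pillowcase as $(\RR^2/\ZZ^2)/\langle -I\rangle$, reading off the single horizontal cylinder from the fundamental strip $(\RR/\ZZ)\times[0,\tfrac12]$ with folded boundaries, identifying cylinder directions with rational slopes via the torus cover, and then using transitivity of $\sl2z$ on primitive integer vectors---is exactly the standard justification one would expect, and the bookkeeping you flag (that a horizontal leaf at height $y_0\in(0,\tfrac12)$ is not self-identified by $-I$, hence has length $1$, while the leaves at heights $0$ and $\tfrac12$ fold to two saddle connections each) is handled correctly. Nothing further is needed.
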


To show whether or not a PTS is 1-cylinder it is better to work with the following equivalent algebraic characterization. 

\begin{lem}\label{PTS}
    The set of (possibly disconnected) degree $n$ PTS is in bijection with $\Sym(n)^3/\Sym(n)$ where $\Sym(n)$ acts by simultaneous conjugation. 
\end{lem}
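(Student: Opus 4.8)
The plan is to translate the flat-geometric notion of a PTS into a purely topological one and then apply the classical monodromy correspondence for covering spaces.

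First I would argue that a degree $n$ PTS carries exactly the same information as a degree $n$ topological cover of the four-punctured sphere. Write $X := S^2 \setminus \{p_1,p_2,p_3,p_4\}$ for the pillowcase with its four poles deleted. Given a PTS $\pi : Y \to S^2$ (branched at most over the $p_i$), restricting over $X$ yields an honest degree $n$ covering $Y^\circ \to X$; conversely, any degree $n$ covering of $X$ extends canonically to a branched cover of $S^2$ by filling in the punctures, and the quadratic differential on $S^2$ pulls back and extends uniquely across the branch points (in a local chart $w = z^{k}$ a simple pole of the differential becomes a zero of order $k-2$, i.e., still a meromorphic quadratic differential with at worst simple poles). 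Hence the set of degree $n$ PTS is in natural bijection with the set of isomorphism classes of degree $n$ covers of $X$, where ``isomorphism'' means a homeomorphism commuting with the projection.

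Second I would invoke the standard classification of covering spaces: $X$ is connected, locally path-connected and semilocally simply connected, so sending a degree $n$ cover $p$ to the monodromy permutation representation of $\pi_1(X,x_0)$ on $p^{-1}(x_0)$, together with a labeling $p^{-1}(x_0) \cong \{1,\dots,n\}$, sets up a bijection between isomorphism classes of degree $n$ covers of $X$ and $\mathrm{Hom}(\pi_1(X,x_0),\Sym(n))/\Sym(n)$, with $\Sym(n)$ acting by conjugation --- the conjugation accounting precisely for the ambiguity in the labeling of the fiber. (Connected PTS would correspond to the transitive homomorphisms, but the lemma asks for all of $\Sym(n)^3/\Sym(n)$, so no transitivity is imposed.) Third I would compute $\pi_1(X,x_0)$: the four-punctured sphere deformation retracts onto a wedge of three circles and admits the classical presentation $\pi_1(X,x_0) = \langle \gamma_1,\gamma_2,\gamma_3,\gamma_4 \mid \gamma_1\gamma_2\gamma_3\gamma_4 = 1\rangle$, with $\gamma_i$ a small positively oriented loop around $p_i$ (as in Figure \ref{pillowcase}); eliminating $\gamma_4$ identifies it with the free group on $\gamma_1,\gamma_2,\gamma_3$. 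Evaluation on these free generators is then a $\Sym(n)$-equivariant bijection $\mathrm{Hom}(\pi_1(X,x_0),\Sym(n)) \to \Sym(n)^3$, $\rho \mapsto (\rho(\gamma_1),\rho(\gamma_2),\rho(\gamma_3))$, which descends to $\mathrm{Hom}(\pi_1(X,x_0),\Sym(n))/\Sym(n) \cong \Sym(n)^3/\Sym(n)$. Composing the three bijections gives the lemma.

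I expect the only genuinely non-formal step to be the first one --- making precise that nothing is lost or gained in passing between the flat surface, its underlying topological branched cover, and the associated unbranched cover of $X$. Everything afterward is the textbook dictionary between covering spaces and permutation representations of the fundamental group, together with the well-known presentation of $\pi_1$ of a punctured sphere. A small bookkeeping point worth stating explicitly is that the hypothesis ``branched over at most the four poles'' imposes no constraint on the homomorphism (monodromy around any loop bounding a disk in $X$ is automatically trivial), so that one really recovers all of $\Sym(n)^3/\Sym(n)$ and not a proper subset.
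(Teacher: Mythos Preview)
Your proposal is correct and follows essentially the same route as the paper: restrict the branched cover to the four-punctured pillowcase, apply the monodromy correspondence between degree $n$ covers and homomorphisms $\pi_1 \to \Sym(n)$ up to conjugation, and use the presentation $\langle \gamma_1,\gamma_2,\gamma_3,\gamma_4 \mid \gamma_1\gamma_2\gamma_3\gamma_4 = 1\rangle$ to identify such homomorphisms with triples in $\Sym(n)^3$. Your write-up is in fact slightly more careful than the paper's (you make explicit the extension of the quadratic differential across the branch points and the reason no extra constraint arises from ``branched at most over the poles''), but the argument is the same.
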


\begin{proof}
    Let $p: X \ra Y$ be a PTS i.e. a cover of the pillowcase branched over the four points shown in Figure \ref{pillowcase}. Let $Z$ denote the set of branch points. The branched cover $p$ is uniquely determined by the cover $p': X-p^{-1}(Z) \ra Y-Z$. The fundamental group of the punctured pillowcase has presentation $\pi_1(Y-Z,y) = \<\gamma_a, \gamma_b, \gamma_c,\gamma_d \st \gamma_a\gamma_b\gamma_c\gamma_d = 1\>$ where the generators are the loops around each branch point as shown in Figure \ref{pillowcase}. The Galois correspondence associates $p'$ with a homomorphism $\rho: \pi_1(Y-Z,y) \ra \Sym(n)$ where $\rho(\gamma) = \sigma_\gamma$ is the permutation of the monodromy action of $\gamma$ on the lifts of the basepoint. Since $\rho$ is completely determined by the images of the generators, the associated PTS is determined by $(\rho(\gamma_a), \rho(\gamma_b), \rho(\gamma_c)) =: (a,b,c) \in \Sym(n)^3$, with the relation forcing $\rho(\gamma_d) = d = (abc)^{-1}$. The Galois correspondence is unique up to a change of basepoint, which is the same as a simultaneous conjugation of $(a,b,c)$ by some element $\sigma \in \Sym(n)$. 
\end{proof}

Whether or not a PTS has one cylinder in the horizontal direction can be checked in the following way.

\begin{lem}\label{n-cycle}
    A degree $n$ PTS $(a,b,c) \in \Sym(n)^3/\Sym(n)$ has exactly one cylinder in the horizontal direction if and only if $ab$ is an $n$-cycle.
\end{lem}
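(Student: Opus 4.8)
The plan is to identify the horizontal cylinders of the cover $X$ with the connected components of the preimage of the pillowcase's horizontal cylinder, and then to convert ``number of components of a covering'' into ``number of cycles of its monodromy''. Write $Y$ for the pillowcase, $Z$ for its four branch points, and (using Lemma~\ref{one-cyl}) let $C\subseteq Y$ be its unique horizontal cylinder. The critical leaves of the horizontal foliation of $Y$ are the horizontal saddle connections joining the points of $Z$; in particular $C$ contains no point of $Z$, so $p\colon X\to Y$ restricts to an \emph{unbranched} degree-$n$ covering $p^{-1}(C)\to C$ of the open annulus $C$.

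First I would check that the horizontal cylinders of $X$ are exactly the connected components of $p^{-1}(C)$. Each such component is a connected finite-degree cover of the annulus $C$, hence is itself an open annulus foliated by closed horizontal leaves, i.e.\ a horizontal cylinder of $X$; and it is maximal because its boundary leaves in $X$ lie over the horizontal saddle connections of $Y$ and so pass through points of $p^{-1}(Z)$, which are the vertices of the pillowcase tiling, hence critical leaves that a cylinder cannot cross. Therefore the number of horizontal cylinders of $X$ equals the number of connected components of the covering $p^{-1}(C)\to C$.

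Since $\pi_1(C,y_0)\cong\ZZ$ is generated by the core curve $\delta$ of $C$, the number of components of $p^{-1}(C)\to C$ is the number of orbits of $\langle\rho(\delta)\rangle$ on the fiber $p^{-1}(y_0)$, identified with $\{1,\dots,n\}$ as in Lemma~\ref{PTS}; equivalently, it is the number of disjoint cycles of the permutation $\rho(\delta)$. It then remains to compute the free homotopy class of $\delta$ in $\pi_1(Y-Z)$: the core curve of the pillowcase's horizontal cylinder bounds a disk in $Y$ that contains exactly two of the four points of $Z$, and --- as one reads off Figure~\ref{pillowcase} --- these are the two encircled by $\gamma_a$ and $\gamma_b$, so $\delta$ is freely homotopic to $\gamma_a\gamma_b$. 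Free homotopy is conjugacy in $\pi_1$, so $\rho(\delta)$ is conjugate in $\Sym(n)$ to $ab$, and since the cycle type is a conjugacy invariant we conclude that $X$ has exactly one horizontal cylinder if and only if $ab$ has a single cycle, i.e.\ is an $n$-cycle.

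The step I expect to be the main obstacle is the identification of $\delta$ with $\gamma_a\gamma_b$: this means reading the geometry of the pillowcase and the chosen generators carefully off Figure~\ref{pillowcase}, keeping track of the basepoint, of the orientation of $\delta$ (this is harmless, as $ab$ and $(ab)^{-1}$ have the same cycle type), and above all of which two of the four branch points the horizontal cylinder separates. A smaller point needing care in the second step is that the boundary circle of $C$ is a two-to-one image of a saddle connection --- the pillowcase cylinder pinches at its ends --- so one must make sure this does not let two components of $p^{-1}(C)$ merge into a single horizontal cylinder of $X$; this is where the convention that the tiling vertices are marked points is used.
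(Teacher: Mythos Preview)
Your argument is correct and is essentially the same as the paper's: identify the core curve of the pillowcase's horizontal cylinder with $\gamma_a\gamma_b$, and count horizontal cylinders of $X$ as the cycles of the monodromy permutation $ab$. You spell out more carefully than the paper why the connected components of $p^{-1}(C)$ are exactly the maximal horizontal cylinders and why free homotopy (hence conjugacy of $\rho(\delta)$ and $ab$) suffices, but the underlying approach is identical.
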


\begin{proof}
    Let $X$ be a PTS specified by $(a,b,c) \in \Sym(n)^3/\Sym(n)$. We may name the generators of $\pi_1(Y-Z,y)$ as in Figure \ref{pillowcase}, which makes $\gamma_a\gamma_b$ the core curve of the horizontal cylinder of $Y$. A horizontal cylinder on $X$ must project to the horizontal cylinder on $Y$. So the preimage in $X$ of $\gamma_a\gamma_b$ is the union of the core curves of the horizontal cylinders of $X$. As in Lemma \ref{PTS}, the permutation $ab$ describes the way the monodromy action of $\gamma_a\gamma_b$ permutes the fibers $p^{-1}(y)$. Since the lift of $\gamma_a\gamma_b$ is a union of core curves of horizontal cylinders, the permutation $ab$ is an $n$-cycle if and only if $X$ has one horizontal cylinder.
\end{proof}

Since the pillowcase has non-trivial translation automorphisms, we choose an action of $\sl2z$ that fixes the branch point encircled by $\gamma_d$ as in Figure \ref{pillowcase}. With this choice it turns out that the action of $\sl2z$ on the stratum restricts to a well defined action on the set of PTS's. A priori all we know is that the action of $A \in \sl2z$ fixes the pillowcase while sending a PTS $X$ to some other surface $X'$ in the same stratum. Letting $p: X \ra Y$ be the branched covering map, we can define a map $p' = ApA^{-1}$ from $X' \ra Y$. This is a conformal map, so $p'$ is in fact a translation surface covering map, and $X'$ is a PTS.

We now want to understand the action of $\sl2z$ on a PTS viewed as a triple in $\Sym(n)^3/\Sym(n)$. For this we will use the braid group on three strands, which has presentation $B_3 = \<\sigma_1, \sigma_2\st \sigma_1\sigma_2\sigma_1 = \sigma_2\sigma_1\sigma_2\>$ and acts on $\Sym(n)^3$ as follows:
    \begin{align*}
        \sigma_1 \cdot (a,b,c) &= (b,b^{-1}ab,c) \\  
        \sigma_2 \cdot (a,b,c) &= (a,c,c^{-1}bc).   
    \end{align*}
(See also \cite[Equation 4.8]{BirmanBrendle}). Let $Z(B_3)$ denote the center of $B_3$. Define $T := \begin{psmallmatrix}1 & 1 \\ 0 & 1\end{psmallmatrix}$ and $R := \begin{psmallmatrix}1 & 0 \\ -1 & 1\end{psmallmatrix}$, which together generate $\sl2z$.

\begin{lem}\label{braid_group}
    The isomorphism $\b3z \ra \psl2z$ by $\sigma_1 \mapsto T, \sigma_2 \mapsto R$ gives an isomorphism of the actions of these groups on the set of PTS. 
\end{lem}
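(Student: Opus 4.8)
The plan is to pin down the $\sl2z$-action on the set of PTS in terms of the monodromy data of Lemma~\ref{PTS} and then recognize it, on generators, as the displayed $B_3$-action. The underlying group isomorphism is classical and I would only recall it: $B_3 = \<\sigma_1,\sigma_2 \st \sigma_1\sigma_2\sigma_1=\sigma_2\sigma_1\sigma_2\>$ has center $\zb3 = \<(\sigma_1\sigma_2)^3\>$, the assignment $\sigma_1\mapsto R,\ \sigma_2\mapsto T$ extends to a surjection $B_3\ra\sl2z$ onto the Veech group of the pillowcase (Lemma~\ref{one-cyl}), and passing to the quotient by $\pm I$ this induces an isomorphism $\b3z\xrightarrow{\sim}\psl2z$ (a standard fact about the presentation of $\sl2z$). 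Thus the content of the lemma is not this isomorphism but its compatibility with the two actions on the set of PTS; and since both $\b3z$ and $\psl2z$ are generated by the images of $\sigma_1$ and $\sigma_2$, and each of the two actions is genuinely an action of the respective group, it suffices to verify the compatibility on these two generators.

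Next I would make the $\sl2z$-action explicit. As explained just before the lemma, $A\in\sl2z$ acts on a PTS via the recipe $p\mapsto A p A^{-1}$; on $Y$ the matrix $A$ acts by the affine homeomorphism $\phi_A$ of derivative $A$ fixing the branch point circled by $\gamma_d$. Picking a path in $Y-Z$ from $y$ to $\phi_A(y)$ lets $\phi_A$ induce an automorphism $(\phi_A)_*$ of $\pi_1(Y-Z,y)$ that is well defined up to inner automorphism, and a direct unwinding of the definition of monodromy shows that $A$ carries the class of $\rho$ to the class of $\rho\circ(\phi_A)_*^{-1}$; under the dictionary of Lemma~\ref{PTS} this is a left action on $\symsym$. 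Since the pillowcase is $\RR^2/(\ZZ^2\rtimes\{\pm I\})$, the matrix $-I$ induces the identity homeomorphism of $Y$, so the action factors through $\psl2z$. It therefore remains to compute $(\phi_R)_*$ and $(\phi_T)_*$ on the loops $\gamma_a,\gamma_b,\gamma_c$ of Figure~\ref{pillowcase}.

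Then I would carry out that computation. Up to relabelling, $R$ and $T$ are the two standard parabolic generators of $\sl2z$, so $\phi_R$ and $\phi_T$ act on the pillowcase as twists along the two coordinate directions; each one fixes two of the four cone points and exchanges the other two, with the two exchanged pairs meeting in the cone point circled by $\gamma_b$ (this can be checked directly, or by lifting to the flat-torus double cover, where the two unipotent generators permute the $2$-torsion points accordingly — which also matches the fact that $\sigma_1$ and $\sigma_2$ both act on the middle entry $b$). Reading off from Figure~\ref{pillowcase} how such a twist drags $\gamma_a,\gamma_b,\gamma_c$ across the exchanged cone points yields, up to the inner-automorphism ambiguity above (which is absorbed by simultaneous conjugation in $\Sym(n)$),
\begin{align*}
    (\phi_R)_*\cdot(a,b,c) &= (b,\ b^{-1}ab,\ c),\\
    (\phi_T)_*\cdot(a,b,c) &= (a,\ c,\ c^{-1}bc),
\end{align*}
which are precisely the formulas defining $\sigma_1\cdot(a,b,c)$ and $\sigma_2\cdot(a,b,c)$. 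Hence the isomorphism $\b3z\ra\psl2z$, $\sigma_1\mapsto R$, $\sigma_2\mapsto T$, intertwines the $B_3$-action on $\symsym$ with the $\psl2z$-action on the set of PTS; as a byproduct $(\sigma_1\sigma_2)^3\mapsto\pm I$ acts trivially, re-confirming that the $B_3$-action descends to $\b3z$.

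The step I expect to be the main obstacle is the last one: fixing orientation and basepoint-path conventions carefully enough that the geometric twists $\phi_R$ and $\phi_T$ reproduce the algebraic moves for $\sigma_1$ and $\sigma_2$ on the nose — rather than their inverses, or the same moves after a cyclic relabelling of $a,b,c$ — and identifying which pair of cone points each parabolic exchanges so that the bookkeeping matches the labelling of $\gamma_a,\gamma_b,\gamma_c,\gamma_d$ in Figure~\ref{pillowcase}. Everything else is standard: the presentation of $B_3$ and its surjection onto $\sl2z$, and the identification of the $\sl2z$-action on covers with precomposition by $(\phi_A)_*^{-1}$.
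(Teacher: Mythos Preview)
Your plan is essentially the paper's proof: identify the $\sl2z$-action on monodromy tuples as $\rho\mapsto\rho\circ(\phi_A)_*^{-1}$, compute the two parabolic generators on $(\gamma_a,\gamma_b,\gamma_c)$ from the picture (the paper uses Figure~\ref{rtactions} and recovers the remaining coordinate from the relation $abcd=1$), match the resulting formulas to the displayed $B_3$-action, and note that everything descends to $\b3z\cong\psl2z$. One bookkeeping point worth flagging---precisely the hazard you anticipated---is that the paper's own proof takes $\sigma_1\mapsto T$ and $\sigma_2\mapsto R$ (opposite to the lemma statement you followed), so your displayed formulas for $(\phi_R)_*$ and $(\phi_T)_*$ are swapped relative to the paper's computation.
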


\begin{proof} 
    Let $X$ be the PTS corresponding to $(a,b,c) \in \symsym$. As in Lemma \ref{PTS} let $\rho: \pi_1(Y{-}Z,y) \ra \Sym(n)$ be the defining homomorphism for $X$, where $Y$ is the pillowcase and $Z$ is the set of branch points. Note that changing the basepoint does not change the PTS because the image in $\Sym(n)$ only changes up to simultaneous conjugation. 
    Any $A \in \sl2z$ induces a homomorphism $\rho_A: \pi_1(Y{-}Z,y) \ra \pi_1(Y{-}Z,A(y))$ because $A$ preserves loops. The composition $\rho' = \rho\circ\rho_{A^{-1}}: \pi_1(Y{-}Z,y) \ra \Sym(n)$ is the homomorphism defining $A\cdot X$ as a PTS. So the action of $A$ on $X$ sends $(a,b,c) \mapsto (\rho'(\gamma_a), \rho'(\gamma_b), \rho'(\gamma_c))$.

    There is a surjection $\varphi: B_3 \ra \sl2z$ by $\sigma_1 \mapsto T$ and $\sigma_2 \mapsto R,$ which is a homomorphism because $TRT = \begin{psmallmatrix}0 & 1 \\ -1 & 0\end{psmallmatrix} = RTR$.  
    
    We will now describe the actions of $T$ and $R$ on $X$. Name the monodromy loops of $Y$ as in Figure \ref{pillowcase}, and let $a,b,c,d$ be the images of $\gamma_a,\gamma_b, \gamma_c, \gamma_d$ respectively under $\rho$. Figure \ref{rtactions} shows that the action of $T$ on $Y$ sends $\gamma_b \mapsto \gamma_a$, so $\rho'(\gamma_a) = \rho\circ\rho_{T^{-1}}(\gamma_a) = \rho(\gamma_b) = b$. Similarly $\rho'(\gamma_c) = c$ and $\rho'(\gamma_d) = d$. To see what $\rho'(\gamma_b)$ is, consider that $\gamma_a\gamma_b\gamma_c\gamma_d = 1$ in the fundamental group. Applying $\rho'$ and $\rho$ we get the relations $b\rho'(\gamma_b)cd = 1$ and $abcd = 1$ in $\Sym(n)$. Substituting $d = (abc)^{-1}$ into the first relation gives $\rho'(\gamma_b) = b^{-1}d^{-1}c^{-1} = b^{-1}(abc)c^{-1} = b^{-1}ab$. So $T\cdot (a,b,c) = (b,b^{-1}ab,c)$. A similar argument shows that $R\cdot(a,b,c) = (a,c,c^{-1}bc)$.

    \begin{figure}
        \centering
        \includegraphics[width=0.98\textwidth]{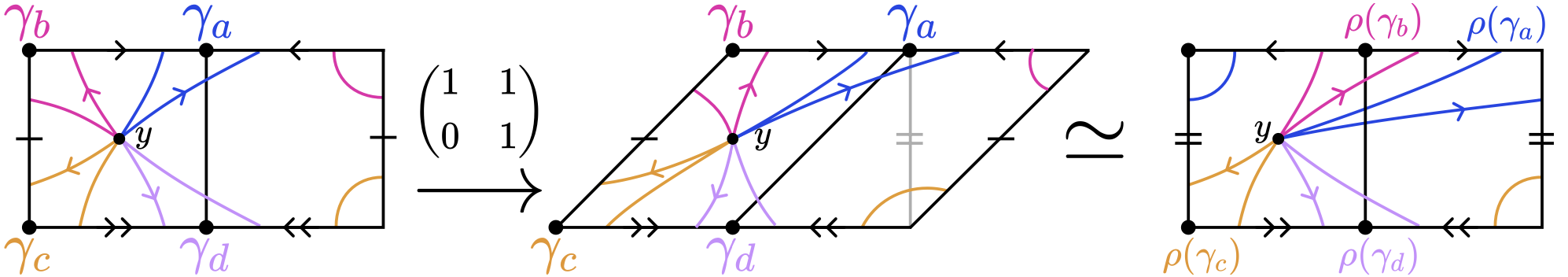} 
        \caption{The actions of $T$ on the pillowcase.}
        \label{rtactions}
    \end{figure}

    Since $T\cdot (a,b,c) = \sigma_1\cdot (a,b,c)$ and $R\cdot (a,b,c) = \sigma_2\cdot (a,b,c)$, and since $\varphi$ is a surjection from $B_3$ onto $\sl2z$, we have just shown that the action of $B_3$ on $\symsym$ factors through the action of $\sl2z$ on $\symsym$. 
    Note that both $T$ and $R$ fix the branch point that $\gamma_d$ encircles, so $-1 \in \sl2z$ acts trivially because it must also fix this branch point. So the action of $\sl2z$ in turn factors through an action of $\psl2z$. The map sending $\sigma_1 \ra T$ and $\sigma_2 \ra R$ induces a well-known isomorphism from $\b3z$ to $\psl2z$ that gives an isomorphism of their actions.
\end{proof}

Notably this implies that the $\sl2z$ orbit of a PTS is the same as the $B_3$ orbit of the corresponding element of $\symsym$.

\section{Proofs of main theorems}\label{maintheorems}
\begin{proof}[Proof of Thm \ref{onecyl}]
    Suppose $X$ is a PTS corresponding to $(a,b,c) \in \Sym(n)^3/\Sym(n)$. Since $\sl2z$ acts transitively on the cylinder directions of the pillowcase, it also acts transitively on the cylinder directions of any cover of the pillowcase because cylinder directions are preserved under the covering map. Fixing a cylinder direction on $X$ there is therefore some element of $\sl2z$ that sends that direction to the horizontal direction. By Lemma \ref{braid_group} this action is by some element of the braid group $g \in B_3$, and $(a',b',c') := g\cdot(a,b,c)$ corresponds to some other PTS $X'$. The fixed cylinder direction on $X$ has one cylinder exactly when $X'$ has one cylinder in the horizontal direction, which by Lemma \ref{n-cycle} happens if and only if $a'b'$ is an $n$-cycle.  Thus $X$ is a 1-cylinder surface if and only if for every triple $(a',b',c')$ in its $B_3$ orbit $a'b'$ is an $n$-cycle.
\end{proof}

\begin{lem}\label{counterexample}
    For any $n \geq 5$ odd, consider the permutations $a = (1\,2\,3\dots n)$ and $b = (1\, (n-1)\, 2\, (n-2) \dots (m+1)\, (m-1)\, m\, n)$ where $m = \frac{n+1}{2}$. The triple $(a,b,(ab)^{-1})$ defines a degree $n$ $1$-cylinder PTS with non-cyclic monodromy group $A_n$.
\end{lem}

\begin{proof}
    Let $a,b,c \in \Sym(n)$ be $n$-cycles such that $abc = 1$.
    The conjugacy classes of $\Sym(n)$ are by cycle type, so $b^{-1}ab$ and $c^{-1}bc$ are also $n$-cycles. Any element of the braid group $B_3$ therefore acts on $(a,b,c)$ by sending it to another triple $(a',b',c')$ of $n$-cycles with $a'b'c' = 1$. Then $a'b' = c'^{-1}$ which is an $n$-cycle. So $(a,b,c)$ satisfies Theorem \ref{onecyl} and thus specifies a $1$-cylinder PTS. Note that a similar proof works when we require one element of $(a,b,c)$ to be the identity and the other two to be $n$-cycles whose product is an $n$-cycle, but it will not lead to a different family of examples because it amounts to renaming the generating loops in Fig. \ref{pillowcase}.

    We will now show that such triples exist for all odd $n\geq 5$. As in the statement of the lemma, let $a= (1\,2\,3\dots n)$ and let $b = (1\, (n-1)\, 2\, (n-2) \dots (m+1)\, (m-1)\, m\, n)$ where $m= \frac{n+1}{2}$. Note that $ab$ is the $n$-cycle $(1 \,n \,2 \,(n-1) \dots (m-1)\, (m+1)\, m)$, so $c = (ab)^{-1}$ is also an $n$-cycle and $abc = 1$. Fix $n \geq 5$ so that $m\neq 2$. We now compute the commutator $[a,b^2]$, which depends on the parity of $m$. If $m$ is even, then $ab^2 = (1\, 3\, 5\,\dots\,(m-1))(2\,4\,\dots(m-2))$ and $(b^2a)^{-1} = (1\,(m-1)\,(m-3)\dots\,5\,3)(2\,n\,(m-2)\,(m-4)\dots 6\,4)$. If $m$ is odd, then $ab^2 = (1\,3\,5\dots (m-2)\, m \, 2\, 4 \dots (m-1))$ and $(b^2a)^{-1} = (1\,(m-1)\,(m-3)\dots 4\,2\,n\,(m-2)\,(m-4)\dots 5\,3)$. In both cases $[a,b^2]  = (2\,n\,m)$, and $\<a,b,c\>$ is non-abelian. 

    This example always generates the non-cyclic monodromy group $A_n$.  Recall that a \textit{block system} for a permutation group $G$ is a $G$-invariant partition of $\{1, \dots, n\}$. We say $G$ is \textit{primitive} if it only has the trivial block systems $\{\{1,2,\dots, n\}\}$ and $\{\{1\},\{2\}, \dots\{n\}\}$. Let $G = \<a,b,c\>$, and suppose $B$ is a nontrivial block system for $G$. Then $B$ is also a block system for the subgroup $\<a\>$. Since $a$ is an $n$-cycle, $B$ must be a partition into congruence classes modulo a divisor $d$ of $n$. If $G$ contains a three cycle, the three elements that are not fixed must either be in the same block or in three singleton blocks. But the latter case does not happen because congruence classes mod $d$ are all the same size, and $B$ is nontrivial. Since $(2\;n\;m) \in G$ the elements of $\{2,n,m\}$ must all be in the same block. This means they are in the same congruence class mod some divisor $d$ of $n$. So $2 \equiv 0 \;(\text{mod } d)$, but $n$ is odd, so $d=1$. This means $B$ was actually trivial and $G$ is primitive. A classical result of Jordan (see \cite[Thm 1.1]{Jones}) says if a primitive permutation group of degree $n$ contains a cycle of prime length fixing at least $3$ points, then the group must contain $A_n$. When $n \geq 7$, the cycle $(2\;n\;m)$ fixes more than $3$ points so $A_n \subseteq G$. But since $G$ is generated by $n$-cycles and $n$ is odd, $G \subseteq A_n$, so in fact $G = A_n$. 
    
    When $n=5$, $G$ is a transitive subgroup of $A_5$ generated by $5$-cycles, which means it's isomorphic to either $C_5$, $D_5$ or $A_5$. Since $[a,b^2] = (253)$, the order of $G$ is divisible by $3$, so $G = A_5$.
\end{proof}

\begin{proof}[Proof of Theorem \ref{maintheorem}]
    When $n \geq 5$ is odd, Lemma \ref{counterexample} constructs an infinite family of non-cyclic 1-cylinder PTS.

    When $n$ is even, we will show no 1-cylinder degree $n$ PTS exists. Let $t(\sigma)$ denote the number of disjoint cycles in a permutation $\sigma \in \Sym(n)$, including 1-cycles. Recall that $t(ab) = t(a) + t(b) - n\;(\text{mod } 2)$ for any $a,b \in \Sym(n)$. This formula holds because the sign of a permutation $\sigma$ is $(-1)^{n-t(\sigma)}$. Suppose $X$ is a 1-cylinder PTS corresponding to $(a,b,c) \in \Sym(n)^3/\Sym(n)$. Recall that the permutations $a,b,c$ correspond to the loops $\gamma_a,\gamma_b, \gamma_c$ on the pillowcase $Y$ as in Figure \ref{pillowcase}. The pairwise products $\gamma_a\gamma_b$, $\gamma_b\gamma_c$, and $\gamma_a\gamma_c$ are all core curves of cylinders on $Y$, and since $X$ is a 1-cylinder surface this means $ab$, $bc$, and $ac$ are all $n$-cycles. By the pigeonhole principle we can choose two distinct permutations $\sigma_1$ and $\sigma_2$ from $\{a,b,c\}$ such that $t(\sigma_1) = t(\sigma_2) \;(\text{mod }2)$, so $t(\sigma_1) + t(\sigma_2) -n = 0 \;(\text{mod } 2)$. But the product of any pair of them is an $n$-cycle so $t(\sigma_1\sigma_2) = 1 \;(\text{mod } 2)$, a contradiction.

\end{proof} 

Combinatorial results of Cangelmi \cite{Cangelmi} show that when $n\geq 5$ is odd there are many ways to choose three $n$-cycles whose product is the identity. This suggests that many groups could arise as the monodromy group of a 1-cylinder PTS. When $n$ is a prime $p$, a classical theorem of Jordan shows that any two non-commuting $p$-cycles generate $A_p$. Results of Jones \cite{Jones} have been used to  classify monodromy groups for primitive square-tiled surface with exactly one cylinder in the horizontal and vertical directions \cite[Thm 9.4]{AFBJM}. For  non-cyclic 1-cylinder PTS this classification implies that when $\<a,b,c\>$ is primitive and $n$ is large enough, the monodromy group is either $C_n, A_n$ or $\text{PGL}$.

\textbf{Question:} What groups can arise as the monodromy group of a 1-cylinder PTS?

\bibliography{bib}{}
\bibliographystyle{amsalpha}

\end{document}